\documentclass[12pt]{article}
\topmargin=-1cm \textheight=24cm \textwidth=16cm \oddsidemargin=-1cm
\usepackage{amsmath,amssymb}
\usepackage{amsthm}

\def\E{\mathbf{E}}

\def\P{\mathbf{P}}
\def\R{\mathbf{R}}

\def\1{\mathbf{1}}

\def\be{\beta}
\def\pa{\partial}
\def\ep{\epsilon}

\def\ga{\gamma}

\newtheorem{prop}{Proposition}[section]

\newtheorem{remark}{Remark}

\newcommand{\si}{\sigma}

\newcommand{\om}{\omega}

\newcommand{\Si}{\Sigma}

\begin{document}
\title{On a probabilistic derivation of the basic particle statistics
(Bose-Einstein, Fermi-Dirac, canonical, grand-canonical, intermediate) and related distributions}

\author{Vassili N. Kolokoltsov\thanks{Department of Statistics, University of Warwick,
 Coventry CV4 7AL UK,
  Email: v.kolokoltsov@warwick.ac.uk}}
\maketitle

\begin{abstract}
Combining intuitive probabilistic assumptions with the basic laws
of classical thermodynamics, using the latter to express probabilistic
parameters in terms of the thermodynamic quantities, we get a simple
unified derivation of the fundamental ensembles of statistical physics avoiding any
limiting procedures, quantum hypothesis and even statistical entropy maximization.
This point of view leads also to some related classes of correlated particle statistics.
\end{abstract}

{\bf Key words:}  Bose-Einstein and Fermi-Dirac distributions, canonical ensemble,
grand-canonical ensemble, intermediate statistics, correlated statistics.


\section{Introduction}

The Bose-Einstein (BE) and Fermi-Dirac (FD) statistics are
key concepts in modern physics, which remarkably start
penetrating even in social sciences (see e.g. \cite{Aert}).
There are many approaches to the derivation of the basic particle
distributions: canonical (or Boltzmann), grand canonical, BE and FD.
For instance, the canonical ensemble can be derived via equilibria
with some hypothetical external reservoirs or from the principle
of maximal entropy (see e.g. \cite{LaLi}). It can also be derived
from the microcanonical ensemble by passing to the limit of  projections
 to a single state of an ensemble of identical particles, as the number
of particles tends to infinity (see e. g. \cite{Khin}, \cite{Sca15}).
BE and FD distributions can be derived from the grand canonical ensemble
(which in turns is obtained via entropy optimization or via external
reservoirs) or via entropy optimization for certain energy level packing
models (see e.g. \cite{LaLi}). We can refer to \cite{Gorroochurn} for
the modern presentation of the original derivations due to Planck and
Bose. For the specific setting of the black body radiation the historical
background from the modern perspective can be found in \cite{Boyer}, see
also  a discussion in \cite{Persson}. Since the usual derivation of BE
statistics includes some law of large number limit, the pre-limit (number
of particle dependent) versions were introduced in \cite{Niven}. Usually
the BE distributions are associated with quantum behavior, and the
canonical Boltzmann distribution is obtained as their classical limit.

Papers \cite{CosGar97} and \cite{CosGar98} present pure probabilistic
derivations of the basic ensembles and the review of various approaches
to such derivations (including a curious idea of Brillouin assigning
particles positive or negative volume to derive Fermi-Dirac or Bose-Einstein
statistics respectively). These papers exploit the conditional probabilities
 arising from adding and taking away particles from an ensemble and derive
the distributions postulating certain properties of such probabilities.

In the present paper we follow a similar methodology, though searching
for the most direct postulate and combining it with the basic laws
of classical thermodynamics (coincidence of intensive variables
for systems in equilibria) in order to express probabilistic parameters
in terms of the basic thermodynamic quantities. Our conditioning postulate
is close in spirit to  Johnson's 'sufficientness postulate'. However, the latter
is given in terms of a Markov chain that 'creates' new particles
(as stressed in \cite{CosGar97}), and we employ a different point of view dealing
with a fixed finite collection of particles.
As a result we get a unified and very elementary derivation of all basic
distributions (including even more exotic intermediate statistics) avoiding
any limiting procedures, entropy maximization or quantum hypothesis.
Additionally this point of view leads to the derivation of more general
classes of particle distributions with correlated statistics.


In Section \ref{geomdist} we introduce our conditioning postulate revealing
a specific feature of the geometric distribution that relates it to particle
statistics. In the following sections we derive all basic statistics as
consequences of this feature and the classical laws of thermodynamics.
Finally we discuss some extensions showing, in particular, a remarkable
robustness of our conditioning postulate which leads to some interesting
 class of correlated statistics.

\section{Geometric distribution}
\label{geomdist}
Suppose one particle can be in one of $k$ states. The state space
of the system of many particles (in its statistical description)
consists of vectors $n=(n_1, \cdots, n_k)$ of $k$ non-negative
integers, where $n_j$ denotes the number of particles in the state
$j$. Adding a particle of type $j$ to such $n$ produces the new state
\[
n+e_j= (n_1, \cdots, n_{j-1}, n_j+1, n_{j+1}, \cdots, n_k),
\]
where $e_j$ is the unit coordinate vector
(with $j$th coordinate $1$ and other coordinates vanishing).

Let us denote by $n^+=(n_1, \cdots, n_k)^+$ the event that there
are at least $n_j$ particles in the state $j$ for each $j=1, \cdots, k$
(we found $n_j$ particles, but there can be more), that is
 \[
 (n_1, \cdots, n_k)^+
 =\cup_{m_1, \cdots, m_k: m_j \ge n_j \, \forall \, j} (m_1, \cdots, m_k).
 \]

Our 'conditioning postulate' is as follows: 
{\it the conditional probabilities} 
\[
q_j=\P((n+e_j)^+|n^+)
\]
\begin{equation}
\label{eqdefmanypartindi3}
=\P((n_1, \cdots, n_{j-1}, n_j+1, n_{j+1},
\cdots, n_k)^+ | (n_1, \cdots, n_{j-1}, n_j, n_{j+1}, \cdots, n_k)^+),
\end{equation}
{\it depend only on the type $j$ of a particle and not on the state} $n$.
This postulate is a kind of {\it no memory} property (it can be also interpreted
 as some no-interaction axiom).

\begin{prop}
Condition  \eqref{eqdefmanypartindi3} with some $q_j\in (0,1)$ is equivalent to the condition
\begin{equation}
\label{eqdefmanypartindi1}
\P(n_1, \cdots, n_{j-1}, n_j+1, n_{j+1}, \cdots, n_k)
=q_j \P(n_1, \cdots, n_{j-1}, n_j, n_{j+1}, \cdots, n_k),
\end{equation}
that is,  the ratio  $\P(n+e_j)/\P(n)=q_j$ depends only on the type of particles.
Each of conditions \eqref{eqdefmanypartindi3} and \eqref{eqdefmanypartindi1} is equivalent
to the formula
\begin{equation}
\label{eqdefmanypartindi5}
\P(n_1, \cdots, n_k)=\prod_{j=1}^k [q_j^{n_j}(1-q_j)],
\end{equation}
that is $(n_1,\cdots, n_k)$ is a random vector of independent geometrical distributions.
\end{prop}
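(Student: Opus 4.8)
The plan is to reduce both conditions \eqref{eqdefmanypartindi3} and \eqref{eqdefmanypartindi1} to one and the same elementary recurrence and then to solve that recurrence explicitly. Write $p(n)=\P(n)$ for the mass function and $F(n)=\P(n^+)$ for the upper-tail function. Since $(n+e_j)^+\subseteq n^+$, the conditional probability in \eqref{eqdefmanypartindi3} is simply the ratio $F(n+e_j)/F(n)$, so \eqref{eqdefmanypartindi3} reads $F(n+e_j)=q_jF(n)$, while \eqref{eqdefmanypartindi1} reads $p(n+e_j)=q_jp(n)$. Both are instances of the multiplicative recurrence $g(n+e_j)=q_jg(n)$, whose general solution, obtained by iterating one coordinate at a time starting from the origin, is $g(n)=g(0)\prod_{j=1}^k q_j^{n_j}$.

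First I would treat the equivalence \eqref{eqdefmanypartindi1} $\Leftrightarrow$ \eqref{eqdefmanypartindi5}. Solving the recurrence for $p$ gives $p(n)=p(0)\prod_j q_j^{n_j}$; summing over all $n$ and using $\sum_n p(n)=1$ together with the factorization into convergent geometric series $\sum_{n_j\ge0}q_j^{n_j}=(1-q_j)^{-1}$ (convergent precisely because $q_j\in(0,1)$) forces $p(0)=\prod_j(1-q_j)$, which is exactly \eqref{eqdefmanypartindi5}. The converse is immediate, since from \eqref{eqdefmanypartindi5} the ratio $p(n+e_j)/p(n)$ equals $q_j$ by direct computation.

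Next I would treat \eqref{eqdefmanypartindi3} $\Leftrightarrow$ \eqref{eqdefmanypartindi5}. Here $F(0)=\P(\Om)=1$, so the recurrence yields $F(n)=\prod_j q_j^{n_j}$ outright. To pass from $F$ back to $p$, I would use the pointwise identity
\[
\1[N=n]=\prod_{j=1}^k\bigl(\1[N_j\ge n_j]-\1[N_j\ge n_j+1]\bigr)=\sum_{S\subseteq\{1,\dots,k\}}(-1)^{|S|}\,\1\bigl[(n+e_S)^+\bigr],
\]
where $e_S=\sum_{j\in S}e_j$; taking expectations gives the inclusion-exclusion formula $p(n)=\sum_S(-1)^{|S|}F(n+e_S)$. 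Substituting $F(n+e_S)=\prod_j q_j^{n_j}\prod_{j\in S}q_j$ and factoring the resulting sum as $\sum_S(-1)^{|S|}\prod_{j\in S}q_j=\prod_j(1-q_j)$ recovers \eqref{eqdefmanypartindi5}. Conversely, summing \eqref{eqdefmanypartindi5} over the tail $\{m:m_j\ge n_j\ \forall j\}$ gives $F(n)=\prod_j q_j^{n_j}$ and hence the ratio $q_j$, establishing \eqref{eqdefmanypartindi3}.

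The only genuinely delicate point is the passage between the tail function $F$ and the mass function $p$ inside the equivalence of \eqref{eqdefmanypartindi3}; everything else is routine bookkeeping on a product recurrence. I expect the cleanest route through this step to be the indicator identity above, which turns the multidimensional telescoping into a single inclusion-exclusion sum that factorizes across coordinates, so that the product structure $\prod_j(1-q_j)$ emerges automatically. I would also record at the outset that the hypothesis $q_j\in(0,1)$ is exactly what guarantees convergence of the geometric series in the normalization and strict positivity of all the events $n^+$ being conditioned upon, so that the ratios defining the $q_j$ are well posed throughout.
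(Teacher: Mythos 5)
Your proof is correct, and its overall architecture --- reducing both conditions to the multiplicative recurrence $g(n+e_j)=q_j\,g(n)$, solving it by iteration from the origin, and fixing the constant by normalization --- matches the paper's. The one place where you genuinely diverge is the implication \eqref{eqdefmanypartindi3} $\Rightarrow$ \eqref{eqdefmanypartindi5}, which you correctly single out as the only delicate step. The paper, having obtained $\P(n^+)=\prod_j q_j^{n_j}$, notes that the joint tail function factorizes, deduces independence of the coordinates, and then reduces to the case $k=1$, where passing from tail to mass function is a one-line subtraction. You instead invert the tail function directly via the inclusion-exclusion identity $\P(n)=\sum_{S}(-1)^{|S|}\,\P((n+e_S)^+)$, which is valid and has the virtue of being fully explicit: the factor $\prod_j(1-q_j)$ emerges from the factorization of $\sum_S(-1)^{|S|}\prod_{j\in S}q_j$ with no appeal to independence. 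Your route is slightly longer to write out but more self-contained; the paper's is terser but leaves the reader to supply the (easy) argument that factorization of the joint tail function implies independence of the coordinates. Your closing observation that $q_j\in(0,1)$ is exactly what guarantees convergence of the normalizing geometric series and strict positivity of every conditioning event $n^+$ is also worth recording; the paper uses this only implicitly.
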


\begin{proof}
 \eqref{eqdefmanypartindi1} $\Rightarrow$  \eqref{eqdefmanypartindi5}:
 Assume \eqref{eqdefmanypartindi1} holds with some $q_j\in (0,1)$.
Denoting by $P_0$ the probability of the vacuum state (the state without particles)
we find directly that
\begin{equation}
\label{eqdefmanypartindi2}
\P(n_1, \cdots, n_k)=P_0 \prod_{j=1}^k q_j^{n_j}.
\end{equation}

By the normalization condition for probabilities \eqref{eqdefmanypartindi2},
\[
1=P_0 \sum_{n_1=0}^{\infty}\cdots \sum_{n_k=0}^{\infty} \prod_{j=1}^k q_j^{n_j}
=\frac{P_0}{(1-q_1) \cdots (1-q_k)},
\]
implying \eqref{eqdefmanypartindi5}.

 \eqref{eqdefmanypartindi5} $\Rightarrow$  \eqref{eqdefmanypartindi3}:
It follows from \eqref{eqdefmanypartindi5} that
\begin{equation}
\label{eqdefmanypartindi30}
\P((n_1, \cdots, n_k)^+) =\sum_{m_1=n_1}^{\infty}\cdots \sum_{m_k=n_k}^{\infty} \prod_{j=1}^k \frac{q_j^{m_j}}{(1-q_j)}
=\prod_{j=1}^k q_j^{n_j},
\end{equation}
implying \eqref{eqdefmanypartindi3}.

\eqref{eqdefmanypartindi3} $\Rightarrow$  \eqref{eqdefmanypartindi5}.
Condition  \eqref{eqdefmanypartindi3} implies \eqref{eqdefmanypartindi30}
and hence the independence of all $n_j$. For $k=1$,
\eqref{eqdefmanypartindi5} follows directly.
\end{proof}

By the independence, the average number of particles in a state $j$ is independent
of other particles and equals the expectation of the corresponding geometric random variable
\begin{equation}
\label{eqdefmanypartindi6}
N_j=\E n_j=(1-q_j)\sum_{n_j=0}^{\infty} n_j q_j^{n_j}
= \frac{q_j}{1-q_j}
=\frac{1}{q_j^{-1}-1}.
\end{equation}
Inverting this formula we see that the values of $q_j$ can be uniquely identified
from the average number of particles in each state:
\begin{equation}
\label{eqdefmanypartindi6a}
q_j=\frac{1}{N_j^{-1}+1}.
\end{equation}

From physics one expects
\begin{equation}
\label{eqdefmanypartindi8}
 q_j=e^{\be(\mu-\ep_j)}, \quad \be =1/k_BT,
 \end{equation}
 where $\mu$ is the chemical potential, $T$ temperature,
in which case \eqref{eqdefmanypartindi6} concretizes to
\begin{equation}
\label{eqdefmanypartindi9}
\E n_j=\frac{1}{\exp \{\be(\ep_j-\mu)\}-1},
\end{equation}
which is the Bose-Einstein distribution.

In the next section we derive \eqref{eqdefmanypartindi8}-\eqref{eqdefmanypartindi9}
from \eqref{eqdefmanypartindi5} and basic thermodynamics.

\begin{remark} There are several well known and insightful ways to characterize
 geometric random vector  \eqref{eqdefmanypartindi5}. For instance, it can be derived
from the entropy maximization as the distribution on the collections $\{n_1, \cdots , n_k\}$
 maximizing the entropy  under the constraints of given $\E n_j$. Alternatively
 it arises as the monkey-typing process of Mandelbrot and Miller \cite{Miller}, which
 can be recast in terms of particle accumulation. It is also an
invariant distribution for the Markov chain on the collections $\{n_1, \cdots , n_k\}$ that
moves $n_j$ to $n_j+1$ or $n_j-1$ (the latter only when $n_j\neq 0$) with given probabilities
$r^j_+$ and $r^j_-$, in which case $q_j=r_+^j/r_-^j$ (used already in \cite{Champ},
see also \cite{YujiIj}). Yet another
way arises from packing randomly $k$ energy levels with indistinguishable particles,
each $j$th level having given number $L_j$
of states, so that given numbers $N_j$ of particles go to the
 $L_j$ states of the $j$th level (with all possible distribution equally probable). In this
way the probabilities $P_n$ to have $n$ particles in any given state of any $j$th level
can be described by the Yule-Simon growth process and become
geometric in the limit $N_j\to \infty$, $L_j\to \infty$ (see discussion and references in
\cite{GarSca} and \cite{SimRoy}). The limit in this scheme can be taken differently. Namely, let $L_j=a_jL$ with
fixed $a_j$ and let $L$ and $N=N_1+\cdots +N_k$ tend to infinity. As was noted in \cite{Mas05} the
limiting distribution depends on whether the ratio $L/N$ tends to infinity, to a finite number
or to zero, in which cases the limiting distribution is exponential (Gibbs canonical),
Bose-Einsten or power law (Pareto type) respectively.
\end{remark}

\section{Bose-Einstein distribution}

We shall now identify the expression for $q_j$ in terms of the classical variables
of thermodynamics, that is, obtain \eqref{eqdefmanypartindi8}. Of course some
properties of thermodynamic variables have to be taken into account for such a
derivation. We  shall use the principle that intensive variables, like temperature
and chemical potential, coincide for systems in equilibrium. This principle can be
taken for granted as an empirical fact or derived from the classical principle of
increasing entropy (that is, from the second law of thermodynamics).

System distributed by \eqref{eqdefmanypartindi5} can be looked at as $k$ systems
in equilibrium, each one characterized by its number of particles $N_j$, and having common
temperature $T$ and chemical potential $\mu$. Assume $\ep_j$ is the energy in state $j$.

Writing the fundamental equations for each system in terms of the grand potentials
$\Phi_j=E_j-S_jT-\mu N_j, \quad j=1, \cdots , k$,
the natural variables are $\mu,T$, and thus each $q_j$ must be a function of $\mu$ and $T$.
By \eqref{eqdefmanypartindi6}, the energy of the subsystem containing the states $j$ is
\[
E_j=\ep_j N_j=\ep_jq_j/(1-q_j).
\]
Consequently,
\[
\Phi_j=\frac{(\ep_j -\mu) q_j}{1-q_j}-k_BT [-\ln (1-q_j)-\frac{q_j}{1-q_j} \ln q_j],
\]
where $S_j=-\ln (1-q_j)-\frac{q_j}{1-q_j} \ln q_j$ is the entropy of the geometric distribution with the parameter $q_j$.
Since $\pa \Phi_j/\pa \mu =-N_j$ (by the definition of $\Phi$ as the Legendre transform of the energy $E=E(S,N)$) and  $N_j=q_j/(1-q_j)$,
\[
\frac{\pa \Phi_j}{\pa \mu}=-N_j+\frac{\pa \Phi_j}{\pa q_j}\frac{\pa q_j}{\pa \mu}=-N_j.
\]
Similarly, $\pa \Phi_j/\pa T =-S_j$, so that
\[
\frac{\pa \Phi_j}{\pa T}=-S_j+\frac{\pa \Phi_j}{\pa q_j}\frac{\pa q_j}{\pa T}=-S_j.
\]
As we cannot have both $\pa q_j/\pa \mu=0$ and $\pa q_j/\pa T=0$ (otherwise $q_j$ is a constant independent of
thermodynamics, so that the corresponding state $j$ can be considered as some irrelevant background) it follows that
\[
\frac{\pa \Phi_j}{\pa q_j}=\frac{\ep_j-\mu}{(1-q_j)^2}+k_BT \frac{\ln q_j}{(1-q_j)^2}=0,
\]
which implies \eqref{eqdefmanypartindi8} by expressing $q_j$ as the subject.

\section{Canonical and grand canonical ensembles}

Assuming \eqref{eqdefmanypartindi5}, what is the conditional probability of having a particle
in state $j$ given that there is only one particle in the system? It equals
\[
\P(j|\text{one particle})=\frac{P_0 q_j}{\sum_k P_0 q_k}.
\]
Under \eqref{eqdefmanypartindi8} it implies
\begin{equation}
\label{eqmanyparttocan1}
\P(j|\text{one particle})=Z^{-1} e^{-\be \ep_j}, \quad Z=\sum_l  e^{-\be \ep_l},
\end{equation}
that is, the standard canonical ensemble.

The grand canonical ensemble for bosons is just distribution \eqref{eqdefmanypartindi5}
with $q_j$ from \eqref{eqdefmanypartindi8}.
Conditioning on the total number of particles,  that is, taking the conditional probability
 \[
\P(n_1, \cdots , n_k|N)=\P(n_1, \cdots , n_k|\text{number of particles is} \, N),
\]
yields the canonical ensemble for $N$ particles.
By \eqref{eqdefmanypartindi8},
 \[
\P(n_1, \cdots , n_k|N)=\frac{\P(n_1, \cdots, n_k)}{\P (\text{number of particles is} \, N)}
= \frac{P_0 \prod e^{-\be (\ep_j-\mu)n_j}}{\P (\text{number of particles is} \, N)}
\]
that is
\begin{equation}
\label{eqmanyparttogrcan}
\P(n_1, \cdots , n_k|N)=Z_{gc}^{-1}(N) \prod e^{-\be (\ep_j-\mu)n_j},
\end{equation}
with
\begin{equation}
\label{eqmanyparttogrcan1}
Z_{gc}(N)=\frac{\P (\text{number of particles is} \, N)}{P_0}
=\sum_{n_1, \cdots , n_k:n_1+\cdots +n_k=N} \prod e^{-\be (\ep_j-\mu)n_j},
\end{equation}
the grand canonical partition function reduced to $N$ particle states.

One can calculate this function by induction yielding
\begin{equation}
\label{eqmanyparttogrcan1a}
Z_{gc}(N)=\sum_{j=1}^k q_j^{N+k-1}\prod_{m\neq j} (q_j-q_m)^{-1},
\end{equation}
in the case of different $q_j$ from \eqref{eqdefmanypartindi8}.

\section{Fermi-Dirac and intermediate statistics}

Similarly to the discussion above and keeping the main assumption \eqref{eqdefmanypartindi1},
 we can analyze the situation
 with the exclusion principle, that is, when the particles cannot occupy the same state, so that
the vector-states $(n_1, \cdots, n_k)$ can have coordinates zero or one only. In this case
\eqref{eqdefmanypartindi2} remains true, but only for this kind of vectors, and the normalization
 condition yields
\[
1=P_0 \sum_{n_1, n_2, \cdots, n_k=0}^1 \prod_{j=1}^k q_j^{n_j}=P_0(1+q_1) \cdots (1+q_k),
\]
so that
\begin{equation}
\label{eqdefmanypartindi10}
P_0=\prod_{j=1}^k (1+q_j)^{-1},
\quad \P(n_1, \cdots, n_k)=\prod_{j=1}^k \frac{q_j^{n_j}}{1+q_j}.
\end{equation}

Therefore the random vector $(n_1, \cdots, n_k)$ is an independent collection of $k$ Bernoulli
random variables, each taking values $0$ or $1$ with the probabilities $1/(1+q_j)$, $q_j/(1+q_j)$.

The average number of particles in state $j$ is thus the expectation of the $j$th Bernoulli random
 variable and equals
\begin{equation}
\label{eqdefmanypartindi12}
\E n_j=\frac{q_j}{1+q_j}
=\frac{1}{q_j^{-1}+1}.
\end{equation}

If $q_j$ are given by \eqref{eqdefmanypartindi8}, \eqref{eqdefmanypartindi12} turns to
\begin{equation}
\label{eqdefmanypartindi14}
\E n_j=\frac{1}{\exp \{\be(\ep_j-\mu)\}+1},
\end{equation}
which is the {\it Fermi-Dirac (FD) distribution}\index{Bose-Einstein distribution}.

Instead of \eqref{eqdefmanypartindi30},
for FD statistics one has
\[
\P((n_1, \cdots, n_k)^+) =\prod_{j:n_j=1} \frac{q_j}{1+q_j},
\]
as this is just the probability that all levels are occupied.

In a more general situation, the number of particles in each state can be bound by some
number $K\ge 1$. The corresponding {\it intermediate statistics} was initially suggested
by G. Gentile Jr (see \cite{CatBas} for a review). For instance, if one aims at using
Bose-Einstein statistics for molecules, their number is bounded (by the total number
of molecules), and the intermediate statistics with bounded occupation numbers may be
more realistic, than their $K\to \infty$ limit.

Under assumption \eqref{eqdefmanypartindi1} and limiting the occupation numbers by a
constant $K$, we get the normalization condition in the form
\[
1=P_0 \sum_{n_1, n_2, \cdots, n_k=0}^K \prod_{j=1}^k q_j^{n_j}
=P_0\prod_{m=1}^k \frac{1-q_m^{K+1}}{1-q_m},
\]
so that
\begin{equation}
\label{eqdefmanypartindi15}
\P(n_1, \cdots, n_k)=\prod_{j=1}^k \left(\frac{q_j^{n_j}(1-q_j)}{1-q_j^{K+1}}\right).
\end{equation}
Thus we obtain the {\it intermediate statistics} (sometimes also called {\it parastatistics})
 for the average number of particles in state $j$
(Gentile's formula) as another corollary of postulate \eqref{eqdefmanypartindi3}:
\begin{equation}
\label{eqdefmanypartindi16}
\E n_j=\frac{1-q_j}{1-q_j^{K+1}} \sum_{n=1}^K n q_j^n
=\frac{q_j[1+Kq_j^{K+1}-(1+K)q_j^K]}{(1-q_j)(1-q_j^{K+1})}.
\end{equation}
We refer to \cite{Mas12} for some recent applications of this statistics.

\section{Generalized Bose-Einstein distribution and canonical ensemble for magnetic systems}

The Bose-Einstein distribution \eqref{eqdefmanypartindi9} was derived from the geometric
distribution for the simplest system characterized only by the temperature and the chemical
properties of the energy levels. In general, different states of a system $\{1, \cdots, k\}$
 can be characterized by other local extensive variables, not only the energy $E$. Let us denote
them $U=(U_1,\cdots, U_m)$ and their normalized values (per particle) in $j$th state by
$u^j=(u_1^j,\cdots, u_m^j)$.
Denoting the dual intensive variables $\nu=(\nu_1, \cdots, \nu_m)$ we can write the thermodynamic
potential of the system with the basic variables $T,\nu$ as
\[
\Phi =E-ST-(\nu,U)=E-ST-\sum_{l=1}^m\nu_l U_l,
\]
and the corresponding thermodynamic potentials for subsystems combining particles in states $j$ as
\[
\Phi_j=\ep_j N_j-S_jT-(\nu, u^j)N_j
=\frac{(\ep_j -(\nu, u^j)) q_j}{1-q_j}-k_B T [-\ln (1-q_j)-\frac{q_j}{1-q_j} \ln q_j].
\]
Since $\pa \Phi_j/\pa \nu =-u_jN_j$ and $\pa \Phi_j/\pa T =-S_j$
(by the definition of the thermodynamic potential as the Legendre transform
of the energy $E=E(S, U)$), it follows that
\[
\frac{\pa \Phi_j}{\pa \nu}=-u_jN_j+\frac{\pa \Phi_j}{\pa q_j}\frac{\pa q_j}{\pa \nu}=-u_jN_j,
\]
and
\[
\frac{\pa \Phi_j}{\pa T}=-S_j+\frac{\pa \Phi_j}{\pa q_j}\frac{\pa q_j}{\pa T}=-S_j.
\]
As previously, we cannot have both $\pa q_j/\pa \nu=0$ and $\pa q_j/\pa T=0$, it follows that
\[
\frac{\pa \Phi_j}{\pa q_j}=\frac{\ep_j-(\nu, u^j)}{(1-q_j)^2}+k_BT \frac{\ln q_j}{(1-q_j)^2}=0,
\]
which implies the following extension of \eqref{eqdefmanypartindi8}:
\begin{equation}
\label{eqdefmanypartindi8new}
 q_j=e^{\be((\nu,u_j)-\ep_j)}.
 \end{equation}
 Formula \eqref{eqdefmanypartindi8} is obtained from \eqref{eqdefmanypartindi8new} if
 $m=1$, $u^j=1$ and $\mu=\nu$.

 The probability of a particle to be in $j$th state conditioned on having only one particle
 becomes now
  \begin{equation}
\label{eqmanyparttocan1new}
\P(j|\text{one particle})=Z^{-1} e^{-\be (\ep_j-(\nu,u_j))}, \quad Z=\sum_l  e^{-\be (\ep_l-(\nu,u_l))},
\end{equation}
extending \eqref{eqmanyparttocan1} and yielding the general version of the canonical ensemble.

 For instance, for the simplest magnetic system specified by a finite number of sites
 $\{1, \cdots, L\}$, each of which can have a spin $\si$ chosen from a fixed subset of
  a vector space (in the simplest case $\si=\pm 1$), a state is a configuration $\Si$, that
  is an assignment of $\si_l$, the values of $\si$ at each site $l$. A configuration $\Si$
  is characterized by its energy $E(\Si)$ (some given function) and the magnetization
  $M(\Si)=\sum \si_l$. The {\it canonical ensemble for such magnetic system} subject to an
  external magnetic field $H$ is the distribution on the configurations given by the formula
 \begin{equation}
\label{eqmanyparttocanmag}
\P(\Si)=Z^{-1} e^{-\be (E(\Si)-HM(\Si))}, \quad Z=\sum_{\Si}  e^{-\be (E(\Si)-HM(\Si))},
\end{equation}
(see e.g. \cite{LaLi}), which is seen to be given by \eqref{eqmanyparttocan1new} with the
index $j$ counting sites replaced by $\Si$, $\nu =H$ and $u_j$ denoted by $M(\Si)$.

Another example is the so-called {\it pressure ensemble} for gases obtained by choosing $\nu$
to be the pressure.

\section{Further links, extensions and exercises}

1. From \eqref{eqmanyparttogrcan1a} one can find the number of particles in state $i$
 conditioned on the total number $N$:
\[
\E (n_i|n_1+\cdots +n_k=N)=Z^{-1}_{gc}(N) \sum_{n_1+\cdots +n_k=N}
n_i \prod_{j=1}^k q_j^{n_j}
\]
\begin{equation}
\label{eqmanyparttogrcan1b}
=Z^{-1}_{gc}(N)\sum_{j\neq i}
\frac{q_iq_j^{k-2}[q_j^{N+1}+Nq_i^{N+1}-(N+1) q_i^N q_j]}{(q_j-q_i)^2 \prod_{m\neq i,j}(q_j-q_m)}.
\end{equation}
This formula is seen to be close to Gentile's intermediate distribution \eqref{eqdefmanypartindi16}.
In fact, \eqref{eqdefmanypartindi16} and \eqref{eqmanyparttogrcan1b} refer to the number of particles
under slightly different constraints.

What will be the limit of \eqref{eqmanyparttogrcan1b}, when $N\to \infty$? Suppose
\[
q_i >\max_{j\neq i} q_j.
\]
Then one can check (to perform calculations it is handy to start with $k=2$) that
\[
\lim_{N\to \infty} \E (n_i|n_1+\cdots +n_k=N)/N=1.
\]
The main point is this exact $1$ on the r.h.s., which means that almost
all particles will eventually  settle on the level $i$ of the lowest energy.
One can get even more precise result. Namely,
\begin{equation}
\label{eqmanyparttogrcan1c}
\lim_{N\to \infty} \E (n_j|n_1+\cdots +n_k=N)=\frac{q_j}{q_i-q_j}
=\frac{1}{e^{\be(\ep_i-\ep_j)}-1}, \quad j\neq i,
\end{equation}
that is, in the limit $N\to \infty$, other levels contain only finite number
 of particles, which are distributed according to the BE statistics on $k-1$ levels
 with the chemical potential coinciding with the lowest energy level.
This is a performance of the general effect of the Bose-Einstein condensation.

2. If in distributions \eqref{eqdefmanypartindi5} or \eqref{eqdefmanypartindi2},
all $q_j$ are close to each other, so that one can write $q_j=p+\ep_j$ with small
$\ep_j$, then, in the first order of approximation, \eqref{eqdefmanypartindi2} becomes
\[
\P(n_1, \cdots, n_k)=P_0 p^N (1+\frac{1}{p}\sum_j \ep_j n_j),
\]
with $N=\sum n_j$, that is, the r.h.s. is bilinear with respect to the
occupation numbers and transition rates. Such bilinear form is used by
the authors of \cite{Aert} in their psychological experiments  with 11 animals.

3. As was mentioned, condition \eqref{eqdefmanypartindi3} is reminiscent
to Johnson's  'sufficientness postulate' (see \cite{Zabel} for its full discussion)
stating that in the Markov process creating new particles the probability to create
a particle of type $i$ depends only on the number of existing particles of this type.
This is different from  \eqref{eqdefmanypartindi3} and leads one to a different
distribution. In particular, if this probability of creation
depends only on the type of a particle, the resulting probability of the occupation
numbers $n=(n_1, \cdots , n_k)$ becomes $N! \prod p_j^{n_j}/n_j!$ (see  \cite{CosGar97}),
which differs by the multinomial coefficient from the multivariate geometric.
We refer to \cite{Zabel97} for further extensions related to the
Johnson-Carnap continuum of inductive methods.

4. Let us now discuss a rather amazing robustness of our basic postulate
\eqref{eqdefmanypartindi3} and some correlated statistics arising from its extension.

\begin{prop}
\label{proponrobustgeom}
Assume that
\begin{equation}
\label{eqnewpost}
\P((n+e_j)^+|n^+)=q(j, \frac{n_j}{n_1+\cdots +n_k}), \quad n=(n_1, \cdots, n_k),
\end{equation}
that is, unlike our initial postulate, the conditional probabilities
on the l.h.s. of this equation are allowed to depend not only on $j$,
 but also on the fraction of $j$th particle in the state $n$.
 If $k>2$ it follows that
 \[
 q(j, \frac{m}{l})=q(j,1)=q_j
 \]
 for all $m/l \neq 0$, so that the deviation from $q_j$ not depending
on the fraction of $j$th particles in the state $n$ can actually manifest itself
only in the choice of $q_{0j}= q(j,0)$. For the unconditional probabilities
one gets the formula
\begin{equation}
\label{eqnewpost1}
\P((n_1, \cdots, n_k)^+)
=\om \prod_{j\in I} [q_{0j}q_j^{n_j-1}], \quad n_1+\cdots +n_k> 0,
\end{equation}
\begin{equation}
\label{eqnewpost2}
\P(n_1, \cdots, n_k)
= \prod_{j\in I} (1-q_j)\prod_{j\notin I}(1-q_{0j})
 \P((n_1, \cdots, n_k)^+), \quad n_1+\cdots +n_k> 0,
\end{equation}
where $I=\{j: n_j\neq 0\}$, and
\begin{equation}
\label{eqnewpost3}
\P(0, \cdots, 0)
=1-\om [1-\prod_{j=1}^k (1-q_{0j})].
\end{equation}
Here $q_j\in (0,1], q_{0j}\in (0,1], \om >0$ are arbitrary constants
subject to the constraint $\P(0, \cdots ,0)\ge 0$, that is
\begin{equation}
\label{eqnewpost4}
\om [1-\prod_{j=1}^k (1-q_{0j})]\le 1.
\end{equation}
In particular, $\om \le 1$ if $q_{0j}=1$ for at least one index $j$.
\end{prop}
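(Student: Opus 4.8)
The plan is to work throughout with the ``upper'' probabilities $F(n):=\P(n^+)$, for which the postulate \eqref{eqnewpost} reads $F(n+e_j)=q(j,n_j/|n|)\,F(n)$, where $|n|=n_1+\cdots+n_k$ and the vacuum transition ($|n|=0$, where the fraction is $0/0$) is allowed its own, a priori free, factor. The engine of the whole argument is that $F$ is a genuine function of the state, so building up a given vector by adding particles in two different orders must give the same value. Concretely, for distinct types $i\neq j$ and any $n$ I would compute $F(n+e_i+e_j)$ along the two orders and equate, obtaining
\[
q\!\left(j,\tfrac{n_j}{|n|+1}\right)q\!\left(i,\tfrac{n_i}{|n|}\right)=q\!\left(i,\tfrac{n_i}{|n|+1}\right)q\!\left(j,\tfrac{n_j}{|n|}\right).
\]
Here I assume all conditional probabilities are strictly positive (the nondegeneracy recorded in the statement as $q_j,q_{0j}\in(0,1]$), so that $F(n)>0$ and the division is legitimate.

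The heart of the proof, and the step where $k>2$ is essential, is to collapse this relation. I would specialise it to a state $n$ with $n_i=a\ge 1$, $n_j=0$, and the remaining $|n|-a$ particles parked in a third type (available precisely because $k\ge 3$). Since $q(j,0)=q_{0j}$ then appears on both sides and cancels, the relation reduces, writing $L=|n|$, to $q(i,a/L)=q(i,a/(L+1))$, valid for every $L\ge a\ge 1$. Thus for fixed $a$ the value $q(i,a/L)$ is independent of $L\ge a$; evaluating at $L=a$ gives the fraction $1$, so $q(i,a/L)=q(i,1)=:q_i$ for all $1\le a\le L$. As every rational in $(0,1]$ has this form, $q(i,x)=q_i$ for all admissible $x\neq0$, which is the asserted collapse. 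For $k=2$ the parking type is unavailable and the spectator argument only reaches the special fractions $a/(a+1)$, which is exactly why the conclusion genuinely needs $k>2$.

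With $q(j,x)=q_j$ for $x\neq0$ and $q(j,0)=q_{0j}$ in hand, the constant $\om$ emerges from the vacuum. Writing the vacuum transition factor as $q(j,0/0)=:\tilde q_j$, the two-order consistency applied to $e_i+e_j$ forces $\tilde q_i/q_{0i}=\tilde q_j/q_{0j}=:\om$, a single free parameter, so $\tilde q_j=\om\,q_{0j}$. Building $F(n)$ from the empty vector for $|n|>0$ then contributes exactly one vacuum factor $\om\,q_{0j_0}$ (for the first particle added), one factor $q_{0j}$ for the first particle of each other occupied type, and $q_j^{n_j-1}$ for the rest, giving $F(n)=\om\prod_{j\in I}q_{0j}q_j^{n_j-1}$, that is \eqref{eqnewpost1}; the result is manifestly independent of the chosen order, confirming global consistency.

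Finally I would pass from $F$ to the point probabilities by M\"obius inversion on the lattice of occupation numbers, $\P(n)=\sum_{S\subseteq\{1,\dots,k\}}(-1)^{|S|}F(n+e_S)$ with $e_S=\sum_{j\in S}e_j$. Using \eqref{eqnewpost1} one checks $F(n+e_S)=F(n)\prod_{j\in S\cap I}q_j\prod_{j\in S\setminus I}q_{0j}$, so the sum factorises over coordinates into $\prod_{j\in I}(1-q_j)\prod_{j\notin I}(1-q_{0j})$, which is \eqref{eqnewpost2}. The same computation at $n=0$, where only the $S=\emptyset$ term $F(0,\dots,0)=1$ survives unscaled, yields \eqref{eqnewpost3}, and imposing $\P(0,\dots,0)\ge0$ gives the constraint \eqref{eqnewpost4}, with $\om\le 1$ whenever some $q_{0j}=1$ makes the product vanish. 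The only genuinely delicate point is the collapse step of the second paragraph; everything after it is bookkeeping, though I would still verify as a sanity check that the remaining $\P(n)$ are non-negative and that the total mass is $1$.
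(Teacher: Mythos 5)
Your proof is correct and is essentially the proof the paper has in mind: the paper leaves the result as an exercise ``based on the exploitation of the consistency equations'' \eqref{eqnewpost5}, and your argument---equating the two orders of adding $e_i$ and $e_j$, the spectator/parking trick that collapses $q(j,\cdot)$ to the two values $q_j$ and $q_{0j}$ (and which visibly fails for $k=2$), the identification $\tilde q_j=\om q_{0j}$ from the vacuum step, and inclusion--exclusion to pass from $\P(n^+)$ to $\P(n)$---is precisely the intended exploitation of those equations. The only caveat, which concerns the statement rather than your reasoning, is that the endpoint $q_j=1$ is degenerate (the series $\sum_{n_j\ge 1}q_j^{n_j-1}$ diverges, so the normalization check requires $q_j<1$).
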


The proof of this theorem is an insightful exercise based on the exploitation of the
consistency equations:
\begin{equation}
\label{eqnewpost5}
\P(n^+)= \P((n-e_j)^+)q(j, \frac{n_j-1}{n_1+\cdots +n_k}), \quad j=1, \cdots, k.
\end{equation}

If $\om=1$ in \eqref{eqnewpost1}, the vector $n=(n_1, \cdots, n_k)$ is seen to have independent
coordinates, which are represented by just slight extensions of the geometric distributions. However, if
$\om \neq 1$, the coordinates of vector $n$ become dependent:
\begin{equation}
\label{eqnewpost6}
\E n_j=\frac{\om q_{0j}}{1-q_j}, \quad \E (n_in_j)
= \frac{1}{\om}\E n_i \E n_j, \quad Cov (n_i,n_j)
=(\frac{1}{\om}-1)\E n_i \E n_j.
\end{equation}
Moreover, \eqref{eqnewpost1} is sensitive to the number of remaining types:
under the condition that $n_j=0$, the coefficient $\om$ for the remaining particles
turns to $\om(1-q_{0j})/(1-\om q_{0j})$.

The distribution of each coordinate $n_j$ is given by
\[
\om q_{0j}q_j^{n_j-1}(1-q_j), \quad n_j\neq 0,
\]
and the entropy of this distribution is found to be
\[
S_j=S_{Ber}(\om q_{0j})+\om q_{0j}S_{Geom}(q_j),
\]
where $S_{Ber}(a)=-a\ln a-(1-a)\ln (1-a)$ and $S_{Geom}(a)$ denote
the entropies of the Bernoulli and geometric random variables with
a parameter $a$. This allows one to find the difference between the
entropy of the vector $n$ and the sum of the entropies $S_j$ of $n_j$.
This difference vanishes if $\om=1$ (as it should be for independent
coordinates), and otherwise it represents the nontrivial entropy of
mixing of particles lying on different energy levels.

One can also check that for any $k>1$ the distribution \eqref{eqnewpost2}, \eqref{eqnewpost3}
can be obtained as the maximum entropy distribution on $\{0,1, \cdots \}^k$
subject to given expectations of the number of particles in each state, the
probabilities for each level to be nonempty and the probability of vacuum, that
is, by $2k+1$ parameters, which can be fixed by the choice of $q_j, q_{0j}, \om$.

It seems that for $k=2$ there are other distributions
satisfying \eqref{eqnewpost}, but it is not at all clear
(at least for the author), how they look like.


As shows already \eqref{eqmanyparttogrcan1c}, the general form of the grand
canonical distribution \eqref{eqdefmanypartindi5} is preserved under various
conditioning and limiting procedures. To support this claim one can also check,
for instance,  that under  \eqref{eqdefmanypartindi5}, the distribution
$\P((n_1,n_2)|n_3=n_1+n_2)$ has the same form with the parameters $k=2$, $q_1'=q_1q_3$,
$q_2'=q_2q_3$, and the distribution $\P((n_1,n_2)|n_1=n_2)$ again the same form
with the parameters $k=1$, $q'=q_1q_2$. However, examples of distribution
\eqref{eqnewpost2}, \eqref{eqnewpost3} can be also obtained from the standard grand
canonical distribution \eqref{eqdefmanypartindi5} by an appropriate conditioning,
for instance, by conditioning on the absence of vacuum. In fact, under \eqref{eqdefmanypartindi5},

\begin{equation}
\label{eqnewpostex}
\P((n_1, \cdots, n_k)|n_1+ \cdots +n_k>0)=\frac{\prod_{j=1}^k [q_j^{n_j}(1-q_j)]}{1-\prod_j(1-q_j)},
\end{equation}
which is \eqref{eqnewpost2}, \eqref{eqnewpost3} with $q_{0j}=q_j$ and $\om=[1-\prod_j(1-q_j)]^{-1}$.

5. A continuous variable version of axiom  \eqref{eqdefmanypartindi3} is the following condition
on the random vector $\tau=(\tau_1, \cdots, \tau_k)$ with non-negative coordinates:
\begin{equation}
\label{eqaxindexp}
\frac{\pa }{\pa s}|_{s=0} \P(\tau_j>t_j+s| \tau_l>t_l \,\, \forall l)=q_j.
\end{equation}
This is easily seen to imply that $\tau_j$ are independent exponential random variables.
The analog of \eqref{eqnewpost} is the condition
\begin{equation}
\label{eqaxindexp1}
\frac{\pa }{\pa s}|_{s=0} \P(\tau_j>t_j+s| \tau_l>t_l \,\, \forall l)
=q_j\left( \frac{t_j}{t_1+\cdots +t_k}\right).
\end{equation}

In analogy with Proposition \ref{proponrobustgeom} one can show that
if the vector has absolutely continuous distribution and $k>2$, then all $q_j$ on the r.h.s.
of \eqref{eqaxindexp1} must be constant, that is, \eqref{eqaxindexp} holds.
Some analogs of more general distributions \eqref{eqnewpost2} can be obtained
assuming the discontinuity of the distribution of $\tau$ on the boundary of its range.
For continuous random variables $\tau_j$ more natural interpretation is in terms
of time to default (finances) or survival time (engineering). Looking at
distributions \eqref{eqnewpost2}, \eqref{eqnewpost3} as natural discretizations
of continuous random vectors satisfying \eqref{eqaxindexp1} can lead to a performance
of the BE distributions for estimating the rates of defaults or survivals.

\end{document}